\documentclass[12pt]{article}
\usepackage[cm]{fullpage}

\usepackage{amsmath,amscd, float,times,rotating}
\usepackage{pb-diagram}

\addtolength{\oddsidemargin}{1cm}
\addtolength{\evensidemargin}{1.5cm}
\addtolength{\textwidth}{-2.5cm} \addtolength{\topmargin}{1cm}
\addtolength{\textheight}{-1cm}

\usepackage{latexsym}
\usepackage{amsfonts}
\usepackage{amsmath}
\usepackage{amssymb}

\numberwithin{equation}{section}
\newcommand{\qed}{\hfill \ensuremath{\Box}}

\def\XXint#1#2#3{{\setbox0=\hbox{$#1{#2#3}{\int}$}
\vcenter{\hbox{$#2#3$}}\kern-.5\wd0}}

\newcommand{\dbar}{\overline{\partial}}

\newcommand{\ddt}[1]{\frac{\partial #1}{\partial t}}

\newcommand{\ddbar}{\sqrt{-1}\partial\dbar}

\begin{document}
\newcounter{remark}
\newcounter{theor}
\setcounter{remark}{0} \setcounter{theor}{1}
\newtheorem{claim}{Claim}
\newtheorem{theorem}{Theorem}[section]
\newtheorem{proposition}{Proposition}[section]
\newtheorem{lemma}{Lemma}[section]
\newtheorem{definition}{Definition}[section]
\newtheorem{corollary}{Corollary}[section]
\newenvironment{proof}[1][Proof]{\begin{trivlist}
\item[\hskip \labelsep {\bfseries #1}]}{\end{trivlist}}
\newenvironment{remark}[1][Remark]{\addtocounter{remark}{1} \begin{trivlist}
\item[\hskip \labelsep {\bfseries #1
\thesection.\theremark}]}{\end{trivlist}}

\centerline{\bf \Large On the Convergence of a Modified
K\"{a}hler-Ricci Flow}

\bigskip

\begin{center}{\large Yuan Yuan}

\end{center}

\begin{abstract}
{\small We study the convergence of a modified K\"{a}hler-Ricci flow
defined by Zhou Zhang. We show that the flow converges to a singular
metric when the limit class is degenerate. This proves a conjecture
of Zhang.}
\end{abstract}


\section{Introduction}
The Ricci flow was introduced by Richard Hamilton \cite{H} on Riemannian
manifolds to study the deformation of metrics. Its analogue in K\"{a}hler geometry, the K\"{a}her-Ricci flow, has been
intensively studied in the recent years. It turns out to be a
powerful method to study the canonical metrics on K\"ahler
manifolds. (See, for instance, the papers  \cite{C} \cite{CT} \cite{Pe} \cite{PS} \cite{PSSW} \cite{ST1} \cite{ST2} \cite{TZhu} and the references therein.)  In a
recent paper \cite{Z3},  a modified K\"{a}hler-Ricci
flow was defined by Zhang by allowing the cohomology class to vary artificially. We briefly describe it as follows:

Let $X$ be a closed K\"{a}hler manifold of complex dimension $n$
with a K\"ahler metric $\omega_0$, and let $\omega_\infty$ be a
real, smooth, closed $(1, 1)$-form with $[\omega_\infty]^n=1$. Let
$\Omega$ be a smooth volume form on $X$ such that $\int_X
\Omega=1$. Set $\chi=\omega_0-\omega_\infty$,
$\omega_t=\omega_\infty+e^{-t}\chi$. Let $\varphi: [0,\infty)\times X \rightarrow \mathbb{R}$ be a
smooth function such that ${\widetilde\omega}_t=\omega_t+\ddbar \varphi>0$. Consider the following
Monge-Amp\`{e}re flow:

\begin{equation}\label{maf1}
\ddt{}\varphi = \log \frac{(\omega_t + \ddbar \varphi)^n
}{\Omega};~~~~~~~~~\varphi(0, \cdot)= 0.
\end{equation}

Then the evolution for the corresponding K\"{a}hler metric is given by:

\begin{equation}\label{krf1}
\ddt{} \tilde{\omega}_t = - Ric( \tilde{\omega}_t ) + Ric(\Omega) -
e^{-t}\chi;~~~~~~~~~~\tilde{\omega}_t(0, \cdot)= \omega_0.
\end{equation}

As pointed out by Zhang, the motivation is to apply the geometric flow techniques to study the complex Monge-Amp\`{e}re equation:

\begin{equation}\label{MA}
(\omega_\infty + \ddbar \psi)^n = \Omega.
\end{equation}

This equation has already been intensively studied very recently by
using the pluri-potential theory developed by Bedford-Taylor,
Demailly, Ko{\l}odziej et al. When $\Omega$ is a smooth volume form
and $[\omega_\infty]$ is K\"{a}hler, the equation is solved by Yau
in his solution to the celebrated Calabi conjecture by using the
continuity method \cite{Y1}. When $\Omega$ is $L^p$ with respect to another
smooth reference volume form and $[\omega_\infty]$ is K\"{a}hler,
the continuous solution is obtained by Ko{\l}odziej \cite{K}. Later
on, the bounded solution is obtained
in \cite{EGZ} and \cite{Z2} independently, generalizing
Ko{\l}odziej's theorem to the case when $[\omega_\infty]$ is big and semi-postive, and $\Omega$ is
also $L^p$. On the other
hand, as an interesting question, equation (\ref{MA}) is also
studied on the symplectic manifolds by Weinkove \cite{We}.

In the case of the unnormalized K\"{a}hler-Ricci flow, the evolution
for the cohomology class of the metrics is in the direction of the
canonical class of the manifold. While in the case of (\ref{krf1}),
one can try to deform any initial metric class to an arbitrary
desirable limit class. In particular, on Calabi-Yau manifolds, the
flow (\ref{krf1}) converges to a Ricci flat metric, if $\Omega$ is a
Calabi-Yau volume form, with the initial metric also Ricci flat in a
different cohomology class \cite{Z3}.

The existence and convergence of the solution are proved by Zhang \cite{Z3} for
the above flow in the case when $[\omega_\infty]$ is K\"{a}hler,
which corresponds to the case considered by Cao in the
classical K\"{a}hler-Ricci flow \cite{C}. When $[\omega_\infty]$ is big,
(\ref{krf1}) may produce singularities at finite time $T<+\infty$. In this
case, the local $C^\infty$ convergence of the flow away from the
stable base locus of $[\omega_T]$ was obtained under the further assumption
that $[\omega_T]$ is semi-ample. When $[\omega_\infty]$ is semi-ample
and big, he obtained the long time existence of the solution and
important estimates and conjectured the convergence even in
this more general setting. In this note, we give a proof to this
conjecture. We will give some definitions before stating our
theorem.

\begin{definition}$[\gamma] \in H^{1,1}(X, \mathbb{C})$ is semi-positive if there exists $\omega \in [\gamma]$ such that $\omega \geq 0$, and is big if $[\gamma]^n:=\int_X \gamma^n >0$.
\end{definition}

\begin{definition} A closed, positive $(1, 1)$- current $\omega$ is called a singular Calabi-Yau metric on $X$ if $\omega$ is a smooth K\"{a}hler metric away from an analytic subvariety $E \subset X$ and satisfies $Ric(\omega)=0$ away from $E$.
\end{definition}

\begin{definition} A volume form $\Omega$ is called Calabi-Yau volume form if $$Ric(\Omega)=-\ddbar \log \Omega=0.$$
\end{definition}

\begin{theorem}\label{Main Theorem} Let $X$ be a K\"ahler manifold
with a K\"ahler metric $\omega_0$. Suppose that $[\omega_\infty] \in H^{1, 1}(X, \mathbb{C})\bigcap
H^2(X, \mathbb{Z})$ is semi-positive and big. Then along the
modified K\"{a}hler-Ricci flow (\ref{krf1}), $\widetilde\omega_t$
converges weakly in the sense of current and converges locally in
$C^\infty$ norm away from the stable base locus of $[\omega_\infty]$
to the unique solution of the degenerate Monge-Amp\`{e}re equation
(\ref{MA}).
\end{theorem}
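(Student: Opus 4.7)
The strategy is to combine pluripotential theory for the prospective limit with parabolic a priori estimates on the open locus $X^\circ := X \setminus B$, where $B$ denotes the stable base locus of $[\omega_\infty]$. By Ko{\l}odziej \cite{K}, Eyssidieux-Guedj-Zeriahi \cite{EGZ}, and Zhang \cite{Z2}, the degenerate Monge-Amp\`ere equation (\ref{MA}) admits a unique bounded solution $\psi$ (up to an additive constant) that is smooth on $X^\circ$; this will be identified as the limit of the flow.

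First I would obtain a uniform $L^\infty$ bound on $\varphi(t,\cdot)$. A direct maximum principle applied to (\ref{maf1}), together with smooth regularizations of $\psi$ serving as barriers, reduces the estimate to the pluripotential $L^\infty$ bound available in \cite{Z2,Z3}. Differentiating (\ref{maf1}) in $t$ yields
\begin{equation*}
\frac{\partial}{\partial t}\dot\varphi = \Delta_{\tilde\omega_t}\dot\varphi - e^{-t}\mathrm{tr}_{\tilde\omega_t}\chi,
\end{equation*}
and the maximum principle on suitable test functions such as $\dot\varphi \pm e^{-t}(A\varphi + B)$ gives $\dot\varphi \to 0$ uniformly on $X$ as $t\to\infty$. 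Combined with the $C^0$ bound, this ensures any subsequential limit $\varphi_\infty$ of $\varphi(t,\cdot)$ satisfies $(\omega_\infty + \ddbar\varphi_\infty)^n = \Omega$ weakly.

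The heart of the argument is the local $C^\infty$ estimate on $X^\circ$. Since $[\omega_\infty]$ is integral, semi-positive and big, Kodaira's lemma furnishes an effective divisor $D$ whose support contains $B$, a defining section $\sigma$, and a smooth Hermitian metric $h$ on $\mathcal{O}(D)$. Following Tsuji and Tian-Zhang, I would work with the barrier
\begin{equation*}
F = \log \mathrm{tr}_{\omega_0}\tilde\omega_t - A\varphi + \log|\sigma|_h^2
\end{equation*}
for $A$ sufficiently large. A Yau-type Laplacian computation, combined with the $C^0$ bound on $\varphi$, shows $\sup_X F \leq C$ uniformly in $t$, whence $\mathrm{tr}_{\omega_0}\tilde\omega_t \leq C_K$ on any compact $K \Subset X^\circ$. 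Together with $\tilde\omega_t^n = e^{\dot\varphi}\Omega \geq c\,\Omega$ this gives a two-sided bound on $\tilde\omega_t$ over compact subsets of $X^\circ$, and Evans-Krylov together with the parabolic Calabi $C^3$ estimate then upgrade this to $C^k_{\mathrm{loc}}(X^\circ)$ bounds for every $k$.

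With these uniform bounds, Arzel\`a-Ascoli produces a subsequential limit $\varphi_\infty \in L^\infty(X) \cap C^\infty(X^\circ)$ solving (\ref{MA}) classically on $X^\circ$; the Bedford-Taylor extension promotes this to a bounded pluripotential solution on $X$, and uniqueness \cite{EGZ,Z2} forces $\varphi_\infty = \psi$. Subsequence independence then yields the full convergence asserted, weakly as currents on $X$ and in $C^\infty_{\mathrm{loc}}(X^\circ)$. I expect the main obstacle to be the barrier step: the constant $A$ must be tuned so that $-A\ddbar\varphi$ absorbs both the bisectional curvature of $\omega_0$ and the decaying $-e^{-t}\chi$ contribution, while $\log|\sigma|_h^2$ precisely compensates the failure of positivity of $\omega_\infty$ along $B$. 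A secondary delicacy is ensuring the $\dot\varphi$ estimate is genuinely uniform across $X$, since the lower bound $\tilde\omega_t^n \geq c\,\Omega$ entering the barrier argument depends on it.
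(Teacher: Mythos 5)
Your overall architecture (pluripotential $C^0$ bound, Tsuji-type barrier with $\log|\sigma|^2_h$ for local higher-order estimates away from the base locus, Arzel\`a--Ascoli plus uniqueness of the limit equation) matches the paper's, but the step on which your identification of the limit rests is a genuine gap: the claim that the maximum principle applied to test functions such as $\dot\varphi\pm e^{-t}(A\varphi+B)$ gives $\dot\varphi\to 0$ \emph{uniformly on $X$}. Computing $(\ddt{}-\widetilde{\Delta})\bigl(\dot\varphi+e^{-t}(A\varphi+B)\bigr)$ produces the term $e^{-t}\,tr_{\widetilde\omega_t}\bigl(A\omega_\infty+(Ae^{-t}-1)\chi\bigr)$, whose positive part involves $tr_{\widetilde\omega_t}\omega_\infty$; precisely on the stable base locus $\widetilde\omega_t$ degenerates and this trace is not controlled, so the differential inequality does not close. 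In the degenerate setting the best that maximum-principle arguments are known to give is $\dot\varphi\le C$ and, after a nontrivial comparison with the normalized flow (\ref{maf2}) (this is Zhang's Theorem \ref{lower bound}, itself a substantial result), $\dot\varphi\ge -C$; no pointwise decay of $\dot\varphi$ is established. Without $\dot\varphi\to 0$, your subsequential limits $\varphi_\infty$ only satisfy $(\omega_\infty+\ddbar\varphi_\infty)^n=e^{f}\Omega$ for some unidentified limit $f$ of $\dot\varphi$, and the limit cannot be matched with the solution $\psi$ of (\ref{MA}).

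The paper circumvents exactly this obstruction with an idea absent from your proposal: the energy functional $\nu(\varphi)=\int_X\dot\varphi\,\widetilde\omega_t^n$, which is uniformly bounded and satisfies $\ddt{}\nu(\varphi)\le-\int_X\|\nabla\dot\varphi\|^2_{\widetilde\omega_t}\widetilde\omega_t^n+Ce^{-t}$. Integrating in $t$ gives $\int_0^{+\infty}\int_X\|\nabla\dot\varphi\|^2_{\widetilde\omega_t}\widetilde\omega_t^n\,dt<+\infty$, and hence, via the local $C^k$ bounds and an elementary calculus lemma, $\nabla\dot\varphi\to 0$ locally uniformly on $X\setminus E$. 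This shows only that $\log\frac{(\omega_\infty+\ddbar\psi)^n}{\Omega}$ is \emph{constant} on $X\setminus E$ along subsequential limits; the constant is then pinned to $0$ using the normalization $[\omega_\infty]^n=\int_X\Omega=1$ together with the boundedness of $\psi$. To repair your argument you would need either to supply this energy/gradient step (or an equivalent substitute identifying the limiting value of $\dot\varphi$), or to give a genuinely new proof that $\dot\varphi\to 0$; the latter is not achievable with the barriers you propose.
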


\begin{corollary} When $X$ is a Calabi-Yau manifold and $\Omega$
is a Calabi-Yau volume form, $\widetilde\omega_t$ converges to a
singular Calabi-Yau metric.
\end{corollary}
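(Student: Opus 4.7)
The plan is to read off the corollary directly from Theorem~\ref{Main Theorem} after checking that the three bullet points in the definition of a singular Calabi-Yau metric are met by the limit $\tilde\omega_\infty := \omega_\infty+\ddbar\psi$, where $\psi$ is the unique bounded solution of (\ref{MA}). The candidate for the analytic subvariety $E$ is the stable base locus of $[\omega_\infty]$; since $[\omega_\infty]\in H^{1,1}(X,\mathbb{C})\cap H^2(X,\mathbb{Z})$ is semi-positive and big, this base locus is a proper analytic subvariety of $X$, so the setup of the definition is available.

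First I would verify that $\tilde\omega_\infty$ is a closed positive $(1,1)$-current on all of $X$. Because the Main Theorem produces $\psi$ as the unique solution to the degenerate Monge-Amp\`ere equation, $\psi$ is (the representative of) a bounded $\omega_\infty$-plurisubharmonic function; together with $\omega_\infty\ge 0$ as a smooth form, this makes $\omega_\infty+\ddbar\psi$ a well-defined closed positive $(1,1)$-current globally on $X$, and the weak convergence $\tilde\omega_t\to\tilde\omega_\infty$ follows from the theorem.

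Next I would check the smoothness and strict positivity on $X\setminus E$. Theorem~\ref{Main Theorem} gives $\tilde\omega_t\to\tilde\omega_\infty$ locally in $C^\infty$ away from $E$, so $\tilde\omega_\infty$ is a smooth closed $(1,1)$-form on $X\setminus E$. Passing to the limit in the Monge-Amp\`ere flow (\ref{maf1}) on $X\setminus E$ shows that $\tilde\omega_\infty^n=\Omega$ there. Since $\Omega$ is a smooth positive volume form (in particular the Calabi-Yau volume form in the corollary hypothesis is nowhere vanishing), $\tilde\omega_\infty$ is a nondegenerate, nonnegative smooth $(1,1)$-form, hence strictly positive definite on $X\setminus E$. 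This realizes $\tilde\omega_\infty$ as a genuine K\"ahler metric off $E$.

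Finally I would establish Ricci-flatness on $X\setminus E$. From $\tilde\omega_\infty^n=\Omega$ we get, as smooth forms on $X\setminus E$,
\[
\mathrm{Ric}(\tilde\omega_\infty) \;=\; -\ddbar\log\tilde\omega_\infty^n \;=\; -\ddbar\log\Omega \;=\; \mathrm{Ric}(\Omega) \;=\; 0,
\]
where the last equality is precisely the hypothesis that $\Omega$ is a Calabi-Yau volume form. Combined with the previous two steps, this exhibits $\tilde\omega_\infty$ as a singular Calabi-Yau metric in the sense of the stated definition, proving the corollary.

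The main substantive input is Theorem~\ref{Main Theorem}; the only thing that requires a little care is confirming strict positivity on $X\setminus E$ rather than mere non-negativity, which is precisely where the nowhere-vanishing of the Calabi-Yau volume form $\Omega$ is used.
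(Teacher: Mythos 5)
Your proposal is correct and matches the paper's (implicit) treatment: the corollary is stated as an immediate consequence of Theorem \ref{Main Theorem}, and you have simply written out the verification that the limit $\omega_\infty+\ddbar\psi$ satisfies each clause of the definition of a singular Calabi--Yau metric, with $\mathrm{Ric}(\Omega)=0$ supplying Ricci-flatness off $E$. The one point you flag -- strict positivity on $X\setminus E$ from nonnegativity plus $\tilde\omega_\infty^n=\Omega>0$ -- is handled correctly.
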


\begin{remark} The singular Calabi-Yau metrics are already obtained in \cite{EGZ} on normal Calabi-Yau K\"{a}hler spaces, and obtained by Song-Tian \cite{ST2} and Tosatti \cite{To} independently in the degenerate class on the algebraic Calabi-Yau manifolds. The uniqueness of the solution to the equation (\ref{MA}) in the degenerate case when $[\omega_\infty]$ is semi-positive and big has been studied in \cite{EGZ} \cite{Z2} \cite{DZ} etc. In particular, a stability theorem is proved in \cite{DZ} which immediately implies the uniqueness. In \cite{To}, Tosatti studied the deformation for a family of Ricci flat K\"{a}hler metrics, whose cohomology classes
are approaching a big and nef class. So our deformations give different paths connecting non-singular and singular Calabi-Yau metrics.
\end{remark}

\begin{remark}\label{Kodaira}
As $[\omega_\infty] \in H^{1, 1}(X, \mathbb{C}) \bigcap H^2(X, \mathbb{Z})$, there
exists a line bundle $L$ over $X$ such that $\omega_\infty \in
c_1(L)$. Moreover, $L$ is big when $[\omega_\infty]$ is semi-positive and big \cite{De}.
Hence $X$ is Moishezon. Furthermore, $X$ is algebraic,
for it is K\"{a}hler. Therefore, by applying Kodaira lemma, we see
that for any small positive number $\epsilon \in \mathbb{Q}$, there exists an effective divisor $E$ on $X$, such that
$[L]-\epsilon [E]>0$.
\end{remark}

The structure of the paper is as follows: in the second section, we
define an energy functional whose derivative in $t$ along the flow
is essentially bounded by the $L^2$ norm of the gradient of the
Ricci potential, after deriving uniform estimates for the metric
potential. From this property, we derive the convergence of the
Ricci potential, and furthermore obtain the convergence of the flow.
In the third section, we sketch a proof to the exponential
convergence when $[\omega_\infty]$ is K\"{a}hler also by using the
energy functional defined in the second section.

\bigskip

{\bf{Acknowledgements:}} The author would like to thank Professor Xiaojun Huang and Jian Song for the constant support and encouragement. He would like to thank Professor D. H. Phong for the interest and valuable suggestion. He is also grateful to V. Tosatti and Z.
Zhang for the useful comments.

\section{Proof of Theorem \ref{Main Theorem}}
In this section, we give a proof of Theorem \ref{Main Theorem}. We first define some
notations for the convenience of our later discussions. Let
$\widetilde{\Delta}$ and $\ddt{} -
\widetilde{\Delta}$ be the Laplacian and the heat operator with
respect to the metric ${\widetilde\omega}_t$ and let $\nabla$ denote the gradient operator with respect to metric $\tilde\omega_t$. Let $S$ be the defining section of $E$. By Kodaira Lemma as stated in Remark \ref{Kodaira}.2, there exists a hermitian
metric $h_E$ on $E$ such that $\omega_\infty-\epsilon Ric(h_E)$, denoted
by $\omega_E$, is strictly positive for any $\epsilon$ small. Let $V_t=[\tilde \omega_t]^n$ with $V_t$ uniformly bounded and $V_0=1$.
Write $\dot \varphi = \ddt \varphi$ for simplicity.

\bigskip

Before proving the convergence, we would like to sketch the
uniform estimates of $\varphi$.

First of all, by the standard computation as in \cite{Z3}, the uniform upper bound of $\ddt{} \varphi$ is deduced from the maximum principle. Secondly, by the result of \cite{EGZ} \cite{Z2}, generalizing the theorem of Ko{\l}odziej \cite{K} to the degenerate case,
we have the $C^0$-estimate $\|u\|_{C^0(X)} \leq C$ independent of $t$ where
$u=\varphi-\int_X \varphi \Omega$. Then to estimate $\ddt{} \varphi$
locally, we will calculate $(\ddt{} -
\widetilde{\Delta})[\dot\varphi +
A(u-\epsilon \log\|S\|^2_{h_E})]$, and then the maximum principle yields $\ddt{} \varphi \geq
-C+\alpha \log\|S\|^2_{h_E}$ for $C, \alpha>0$.







Next, we follow the standard second order estimate as in \cite{Y1}
\cite{C} \cite{Si} \cite{Ts}.

Calculating $(\ddt{} -
\widetilde{\Delta})[\log
tr_{\omega_E+e^{-t}\chi} {\widetilde\omega}_t - A(u - \epsilon \log
\|S\|^2_{h_E})]$ and applying maximum principle, we have: $\mid \Delta_{\omega_E}\varphi \mid \leq C$. Then by using the Schauder estimates and third order estimate as in \cite{Z3}, we can obtain
the local uniform estimate: For any $k \geq 0, K \subset\subset
X\setminus E$, there exists $C_{k, K}>0$, such that:

\begin{equation}\label{uniform estimate}
\|u\|_{C^k([0, +\infty) \times K)} \leq C_{k, K}.
\end{equation}

\bigskip

In \cite{Z3},  Zhang proved the following theorem by comparing (\ref{maf1}) with the K\"{a}hler-Ricci flow (\ref{maf2}). We include the detail of the proof here for the sake of completeness. This uniform lower bound appears to be crucial in the proof of convergence.

\begin{theorem}\label{lower bound}
(\cite{Z3})
There exists $C>0$ such that $\ddt{} \varphi \geq -C$ holds
uniformly along (\ref{maf1}) or (\ref{krf1}).
\end{theorem}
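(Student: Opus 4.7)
The plan is to apply the parabolic maximum principle to an auxiliary function combining $\dot\varphi$, the bounded potential $u=\varphi-\int_X\varphi\,\Omega$, the companion solution $\psi$ of the classical Monge-Amp\`ere flow (\ref{maf2}), and the Kodaira barrier $\log\|S\|^2_{h_E}$. Since (\ref{maf2}) has fixed K\"ahler reference $\omega_0$, a direct computation shows that $\dot\psi$ satisfies the heat equation with respect to $\omega_\psi=\omega_0+\ddbar\psi$, so $\|\dot\psi\|_{L^\infty}\le\|\log(\omega_0^n/\Omega)\|_{L^\infty}$ uniformly; the goal is to transfer this bound to $\dot\varphi$.

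Differentiating (\ref{maf1}) via $\partial_t\tilde\omega_t=-e^{-t}\chi+\ddbar\dot\varphi$ yields $(\partial_t-\tilde\Delta)\dot\varphi=-e^{-t}\tr_{\tilde\omega_t}\chi$. Combined with $\tilde\Delta\varphi=n-\tr_{\tilde\omega_t}\omega_t$ and $\omega_t-e^{-t}\chi=\omega_\infty$, one obtains the crucial identity
\[
(\partial_t-\tilde\Delta)(\dot\varphi+\varphi)=\dot\varphi-n+\tr_{\tilde\omega_t}\omega_\infty,
\]
whose last term is nonnegative by semi-positivity of $\omega_\infty$. Kodaira's lemma (Remark~\ref{Kodaira}.2) yields a K\"ahler form $\omega_E=\omega_\infty-\epsilon\ric(h_E)>0$ for small $\epsilon>0$, and since $\ddbar\log\|S\|^2_{h_E}=-\ric(h_E)$ away from $E$, adding a multiple of $\log\|S\|^2_{h_E}$ to the auxiliary function replaces $\omega_\infty$ by $\omega_E$ in the trace, upgrading semi-positivity to strict K\"ahlerity while simultaneously forcing the relevant extremum into the smooth locus $X\setminus E$, where (\ref{uniform estimate}) applies.

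Concretely, I would work with a quantity $G=\dot\varphi+A(u-\psi)\pm\epsilon\log\|S\|^2_{h_E}$ for $A$ large, using $(\partial_t-\tilde\Delta)\psi=\dot\psi-\tr_{\tilde\omega_t}(\omega_\psi-\omega_0)$. Choosing signs and $A$ so that the Kodaira term $A\tr_{\tilde\omega_t}\omega_E$ dominates the cross term $A\tr_{\tilde\omega_t}(\omega_\psi-\omega_0)$, I expect an inequality of the form $(\partial_t-\tilde\Delta)G\geq\dot\varphi-A\dot\psi-C$; at an interior extremum this forces $\dot\varphi\geq A\dot\psi-C'$, and combining with $\|\dot\psi\|_{L^\infty}\le C$ produces the uniform bound $\dot\varphi\geq-C''$. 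The main obstacle is the sign and coefficient bookkeeping: one must ensure that the unbounded cross term $\tr_{\tilde\omega_t}\omega_\psi$ is absorbed by $A\tr_{\tilde\omega_t}\omega_E$, that the residual $e^{-t}\chi$ terms decay uniformly so that $C$ is independent of $t\to\infty$, and that the logarithmic barrier pushes the extremum into $X\setminus E$. The comparison with (\ref{maf2}) is essential because $\varphi$ itself is only controlled up to a time-dependent additive constant, so $\psi$ provides the global reference needed to convert the pointwise extremum bound into a uniform one.
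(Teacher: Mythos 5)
There are two genuine gaps here. First, you misread the companion flow (\ref{maf2}): it is \emph{not} the classical flow with fixed reference form $\omega_0$, but still uses the moving family $\omega_t=\omega_\infty+e^{-t}\chi$. Consequently $\dot\phi$ satisfies $(\partial_t-\widehat\Delta)\dot\phi=-e^{-t}\textnormal{tr}_{\widehat\omega_t}\chi-\dot\phi$, and the piece $-e^{-t}\textnormal{tr}_{\widehat\omega_t}\omega_0$ has the unfavorable sign and no a priori bound, so the uniform estimate $\|\dot\phi\|_{L^\infty}\leq C$ does not follow from a one-line heat-equation argument as you claim. The paper devotes a separate theorem to exactly this point: it shows $\phi+\dot\phi$ is essentially decreasing (via the evolution of $\dot\phi+\ddot\phi$) and converges from above to the bounded pluripotential limit $\phi_\infty$, and only this monotonicity-plus-convergence argument yields $\dot\phi\geq -C$. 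Your proof presupposes that bound without an argument valid for the actual equation.

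Second, and more seriously, your maximum-principle step points the wrong way. From an inequality of the form $(\partial_t-\widetilde\Delta)G\geq \dot\varphi-A\dot\psi-C$, in which the right-hand side is \emph{linear} in $\dot\varphi$ (the nonnegative trace terms having been discarded), an interior minimum of $G$ gives $\dot\varphi(t_0,p_0)\leq A\dot\psi+C$ --- an upper bound at the minimum point, which cannot be converted into a lower bound for $G$ and hence for $\dot\varphi$. One needs a term on the right that blows up as $\dot\varphi\to-\infty$. If you manufacture it from $\textnormal{tr}_{\widetilde\omega_t}\omega_E\geq c\,e^{-\dot\varphi/n}$ via the Kodaira barrier, then $\epsilon\log\|S\|^2_{h_E}$ re-enters the final estimate and you only recover the degenerate bound $\dot\varphi\geq -C+\epsilon\log\|S\|^2_{h_E}$, which the paper already records earlier and which is strictly weaker than the uniform statement of the theorem. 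The paper's actual device is to put the time-shifted companion metric into the trace: with $\kappa=(1-e^{-T_0})\dot\varphi+u-\phi(t+T_0)$ the reference forms combine so that $\textnormal{tr}_{\widetilde\omega_t}\omega_{t+T_0}$ can be replaced by $\textnormal{tr}_{\widetilde\omega_t}\widehat\omega_{t+T_0}\geq n\bigl(C_3e^{-\dot\varphi}\bigr)^{1/n}$, where $C_3$ comes from the global volume bound $\widehat\omega_{t+T_0}^n=e^{\phi+\dot\phi}\Omega\geq C_3\Omega$ (itself a consequence of the two-sided bound on $\phi+\dot\phi$). This superlinear term is available on all of $X$ with no barrier, since every quantity in $\kappa$ is smooth and bounded on $X$; at an interior minimum it forces $\dot\varphi\geq -C_4$ there, and boundedness of $u$ and $\phi$ then propagates the bound everywhere. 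Your proposal is missing this mechanism.
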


We derive a calculus lemma now for later application.

\begin{lemma}\label{lemma}
Let $f(t) \in C^1([0, +\infty))$ be a non-negative function. If
$\int_{0}^{+\infty}f(t)d t<+\infty$ and $\ddt f$ is uniformly
bounded, then $f(t) \rightarrow 0$ as $t \rightarrow +\infty.$
\end{lemma}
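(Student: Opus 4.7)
The plan is to argue by contradiction. Suppose $f(t) \not\to 0$ as $t \to +\infty$. Then there exist $\varepsilon > 0$ and a sequence $t_n \to +\infty$ such that $f(t_n) \geq \varepsilon$ for every $n$. Without loss of generality (by passing to a subsequence) we may assume $t_{n+1} > t_n + 1$, say.

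The next step is to use the uniform bound on the derivative to promote these isolated large values into intervals of definite length on which $f$ remains bounded below. Let $M > 0$ be such that $|f'(t)| \leq M$ for all $t \geq 0$. By the mean value theorem, for any $t$ with $|t - t_n| \leq \frac{\varepsilon}{2M}$,
\begin{equation*}
f(t) \geq f(t_n) - M |t - t_n| \geq \varepsilon - \frac{\varepsilon}{2} = \frac{\varepsilon}{2}.
\end{equation*}
Hence $f(t) \geq \varepsilon/2$ on each interval $I_n := [t_n - \frac{\varepsilon}{2M},\, t_n + \frac{\varepsilon}{2M}]$, which has length $\varepsilon/M$. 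Passing to a further subsequence if necessary (the separation of the $t_n$ already makes this easy once we take $n$ large), the intervals $I_n$ can be assumed pairwise disjoint.

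Summing over all these intervals produces the desired contradiction with integrability:
\begin{equation*}
\int_0^{+\infty} f(t)\, dt \;\geq\; \sum_{n} \int_{I_n} f(t)\, dt \;\geq\; \sum_n \frac{\varepsilon}{M} \cdot \frac{\varepsilon}{2} \;=\; +\infty,
\end{equation*}
contradicting the hypothesis $\int_0^{+\infty} f(t)\, dt < +\infty$. Therefore $f(t) \to 0$ as $t \to +\infty$.

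The argument is entirely elementary and there is no real obstacle; the only care needed is bookkeeping to guarantee the intervals $I_n$ are disjoint so that the lower bounds on $\int_{I_n} f$ genuinely add up, which is ensured by thinning the sequence $\{t_n\}$ so that consecutive points are separated by more than $\varepsilon/M$.
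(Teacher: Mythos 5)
Your proof is correct and follows essentially the same route as the paper: argue by contradiction, use the uniform derivative bound to spread each large value $f(t_n)\geq\varepsilon$ over a disjoint interval of fixed length on which $f\geq\varepsilon/2$, and sum to contradict integrability. The only difference is that you spell out the mean value theorem step and the explicit interval length $\varepsilon/M$, which the paper leaves implicit.
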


\begin{proof} We prove this calculus lemma by contradiction. Suppose there exist a sequence $t_i\rightarrow +\infty$ and $\delta>0$, such that $f(t_i)>\delta$. Since $\ddt f$ is uniformly bounded, there exist a sequence of connected, non-overlapping intervals $I_i$ containing $t_i$ with fixed length $l$, such that $f(t) \geq \frac{\delta}{2}$ over $I_i$. Then $\int_{\bigcup_i I_i}f(t) d t \geq \sum_i l\frac{\delta}{2} \rightarrow +\infty$, contradicting with $\int_{0}^{+\infty}f(t)d t<+\infty$.
\qed\end{proof}

Let ${\widehat\omega}_t=\omega_t+\ddbar \phi$ and consider the
Monge-Amp\`{e}re flow, as well as its corresponding evolution of metrics:

\begin{equation}\label{maf2}
\ddt{}\phi = \log \frac{(\omega_t + \ddbar \phi)^n }{\Omega} -
\phi ;~~~~~~~~~~ \phi(0, \cdot)= 0.
\end{equation}

\begin{equation}\label{krf2}
\ddt{} {\widehat\omega}_t = - Ric( {\widehat\omega}_t ) +
Ric(\Omega) - {\widehat\omega}_t + \omega_\infty;~~~~~~~~~~
 {\widehat\omega}_t(0, \cdot)= \omega_0.
\end{equation}

The following theorem is proved in \cite{Z1} and we include the proof here.

\begin{theorem}There exists $C>0$, such that $\ddt{} \phi
\geq -C$ uniformly along (\ref{maf2}).
\end{theorem}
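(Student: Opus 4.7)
The plan is to apply the parabolic minimum principle to the auxiliary function $H := \dot\phi + \phi$. The role of the normalizing $-\phi$ in (\ref{maf2}) is precisely that this combination has a heat-operator image involving $\tr_{\widehat\omega_t}\omega_\infty$ and nothing else, so that an AM-GM inequality closes off the estimate.

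First I would differentiate (\ref{maf2}) in $t$. Using $\ddt{}\omega_t = -e^{-t}\chi$ gives
\begin{equation*}
\bigl(\ddt{} - \widehat\Delta\bigr)\dot\phi \;=\; -e^{-t}\tr_{\widehat\omega_t}\chi - \dot\phi,
\end{equation*}
and separately $\widehat\Delta\phi = n - \tr_{\widehat\omega_t}\omega_t$ gives $(\ddt{} - \widehat\Delta)\phi = \dot\phi - n + \tr_{\widehat\omega_t}\omega_t$. Adding these, the $\pm\dot\phi$ contributions cancel and $\omega_t - e^{-t}\chi = \omega_\infty$, producing the clean identity
\begin{equation*}
\bigl(\ddt{} - \widehat\Delta\bigr) H \;=\; \tr_{\widehat\omega_t}\omega_\infty - n.
\end{equation*}

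Next I would invoke AM-GM. Since $\widehat\omega_t^n = e^{H}\,\Omega$ by the flow equation,
\begin{equation*}
\tr_{\widehat\omega_t}\omega_\infty \;\geq\; n\bigl(\omega_\infty^n / \widehat\omega_t^n\bigr)^{1/n} \;=\; n\bigl(\omega_\infty^n/\Omega\bigr)^{1/n}\, e^{-H/n}.
\end{equation*}
At an interior minimum $(t_0,x_0)$ of $H$ on $[0,T]\times X$, the parabolic minimum principle forces $(\ddt{} - \widehat\Delta)H \leq 0$, hence $\tr_{\widehat\omega_t}\omega_\infty \leq n$, which in turn forces $H(t_0,x_0) \geq \log(\omega_\infty^n/\Omega)(x_0)$; if instead the infimum is attained at $t_0 = 0$ it is controlled by $H|_{t=0} = \log(\omega_0^n/\Omega)$. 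Combined with the Ko\l{}odziej-type uniform $C^0$-bound on $\phi$ (from \cite{EGZ,Z2}), this would give $\dot\phi = H - \phi \geq -C$.

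The main obstacle is that in the semi-positive and big setting of the paper $\omega_\infty^n$ vanishes along the stable base locus, so the pointwise bound $\log(\omega_\infty^n/\Omega)(x_0)$ is no longer uniformly bounded below and one must prevent the minimum of $H$ from sitting on the degenerate locus. I would handle this by the Kodaira perturbation from Remark \ref{Kodaira}: replace $\omega_\infty$ by the strictly positive K\"ahler form $\omega_E = \omega_\infty - \epsilon\,\ric(h_E)$ and run the same computation with $H - \epsilon\log\|S\|^2_{h_E}$ in place of $H$. Since $-\epsilon\log\|S\|^2_{h_E} \to +\infty$ along $E$, the modified function attains its infimum away from $E$, and AM-GM applied with $\omega_E^n/\Omega$ now uniformly positive closes the argument; the delicate bookkeeping is to let the Kodaira parameter play off against the singular correction so that the final lower bound on $\dot\phi$ is genuinely uniform and independent of $\epsilon$.
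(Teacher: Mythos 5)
Your identity $(\ddt{} - \widehat\Delta)(\dot\phi+\phi) = \mathrm{tr}_{\widehat\omega_t}\omega_\infty - n$ is correct, and in the case $\omega_\infty>0$ the AM--GM step does close the argument. But the theorem is needed in the degenerate setting, where $[\omega_\infty]$ is only semi-positive and big, and there the step you defer as ``delicate bookkeeping'' is the entire difficulty and cannot be closed within your framework. Running the minimum principle on $G=\dot\phi+\phi-\epsilon\log\|S\|^2_{h_E}$ gives, at the interior minimum (which does sit off $E$), the bound $\dot\phi+\phi\geq \inf_X\log(\omega_E^n/\Omega)=:-C_\epsilon$ at that point, and hence globally only
$$\dot\phi \;\geq\; -C'_\epsilon + \epsilon\log\|S\|^2_{h_E},$$
which tends to $-\infty$ along $E$. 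For fixed $\epsilon$ this is strictly weaker than the theorem --- it is the standard Tsuji/Tian--Zhang-type degenerate bound --- and letting $\epsilon\to 0$ does not help, because $\omega_E^n=(\omega_\infty-\epsilon Ric(h_E))^n\to\omega_\infty^n$, which in general vanishes somewhere on $X$, so $C_\epsilon\to+\infty$. There is no trade-off between the constant $C_\epsilon$ and the correction $\epsilon\log\|S\|^2_{h_E}$ that yields a bound independent of the point; the pointwise AM--GM inevitably sees the vanishing of $\omega_\infty^n$.

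The paper's proof uses an entirely different mechanism precisely to get around this. One shows $(\ddt{}-\widehat\Delta)(\dot\phi+\ddot\phi)\leq -(\dot\phi+\ddot\phi)$, so the maximum principle gives $\ddt{}(\phi+\dot\phi)\leq Ce^{-t}$, i.e.\ $\phi+\dot\phi+Ce^{-t}$ is non-increasing in $t$. This quantity converges (locally smoothly off $E$) to the weak solution $\phi_\infty$ of the limiting degenerate Monge--Amp\`ere equation, and $\phi_\infty$ is bounded below by the pluripotential $L^\infty$-estimates of Ko{\l}odziej, Eyssidieux--Guedj--Zeriahi and Zhang. A non-increasing quantity is at least its limit, so $\phi+\dot\phi\geq\phi_\infty-Ce^{-t}\geq -C$ off $E$, hence everywhere by smoothness of $\dot\phi$, and the uniform bound on $\phi$ finishes. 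The lower bound thus comes from an $L^\infty$ estimate for the limit equation rather than from a pointwise trace inequality; you would need to import this (or an equivalent) idea to repair your argument.
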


\begin{proof} Standard computation shows that:





$$(\ddt{} -
\widehat{\Delta}) (\ddt \phi+\frac{\partial
^2 \phi}{\partial t ^2}) \leq -(\ddt \phi+\frac{\partial ^2
\phi}{\partial t ^2}).$$

Then the maximum principle yields:

$$\ddt{} (\phi+\ddt \phi) \leq C e^{-t} ~~and~~ \ddt{} \phi \leq C e^{-\frac{t}{2}}
,$$

which means that $\phi$ and $\phi+\ddt \phi$ are essentially
decreasing along the flow, for example: $\ddt{}(\phi+C e^{-t}) \leq
0.$ As we have the similar estimate that $\phi$ is locally uniformly
bounded away from $E$, then $\phi$ converges to some
$\omega_\infty-$ plurisubharmonic function away from $E$, which also
yields that the pointwise limit of $\ddt\phi$ is $0$ away from $E$
by Lemma \ref{lemma}. On the other hand, the weak convergence
of (\ref{maf2}) is obtained in \cite{Ts} \cite{TZha}. Suppose
$\phi_\infty$ is the weak solution to the degenerate
Monge-Amp\`{e}re equation as the limit equation with $\phi+\dot\phi
\rightarrow \phi_\infty$ away from $E$ when $t \rightarrow +\infty$:
$$e^{\phi_\infty}=(\omega_\infty+\ddbar \phi_\infty)^n~~ \leftarrow ~~e^{\phi+\dot\phi}=(\omega_t+\ddbar \phi)^n.$$
Furthermore, from the pluri-potential theory \cite{EGZ} \cite{Z2}
and equation (\ref{maf2}), we know that $-C \leq \phi, \phi_\infty
\leq C$ uniformly. With the essentially decreasing property, we know
that away from $E$:
$$\phi+\dot\phi+C e^{-t} \geq \phi_\infty \geq -C,$$ yielding
$\dot\phi \geq -C$ away from $E$. The theorem is proved with
$\dot\phi$ smooth on $X$.

\qed\end{proof}

We are now ready to give Zhang's proof to Theorem \ref{lower bound}.

\bigskip

{\bf{Proof of Theorem \ref{lower bound}:}}

Fix $T_0>0$. Let $\kappa(t,
\cdot)=(1-e^{-T_0})\dot\varphi+u-\phi(t+T_0)$ with $\kappa(0,
\cdot) \geq -C_1$. Then:

\begin{eqnarray*}
\ddt{} \kappa(t, \cdot) &=&
\widetilde{\Delta}((1-e^{-T_0})\dot\varphi + u) + \dot u -
\dot\phi(t+T_0) - n +
tr_{{\widetilde\omega}_t} \omega_{t+T_0}\\
&=& \widetilde{\Delta}((1-e^{-T_0})\dot\varphi + u - \phi(t+T_0))
+ \dot u - \dot\phi(t+T_0) -n + tr_{{\widetilde\omega}_t} \widehat\omega_{t+T_0}\\
&\geq& \widetilde{\Delta}((1-e^{-T_0})\dot\varphi + u -
\phi(t+T_0)) + \dot\varphi -C_2 + n(\frac{C_3}{e^{\dot\varphi}
})^\frac{1}{n},
\end{eqnarray*}

where the fact: $\dot u \geq \dot\varphi-C, -C \leq
\ddt{} \phi \leq C$ and $\widehat\omega_t \geq C_3\Omega$ are used.

Suppose $\kappa(t, \cdot)$ achieves minimum at $(t_0, p_0)$ with
$t_0>0$. Then the maximum principle yields $\dot\varphi(t_0, p_0)
\geq -C_4$. Hence $\dot\varphi$ is bounded from below. Suppose
$\kappa(t, \cdot)$ achieves minimum at $t=0$. Then the theorem
follows trivially.
\qed

\bigskip

Inspired from the Mabuchi $K$-energy in the study on the
convergence of K\"{a}hler-Ricci flow on the Fano manifolds, we
similarly define an energy functional as follows:
$$\nu(\varphi)=\int_X \log \frac{(\omega_t + \ddbar \varphi)^n
}{\Omega} (\omega_t + \ddbar \varphi)^n.$$

Next, we will give some properties of $\nu(\varphi)$ and then a key lemma for the proof of the theorem.

\begin{proposition}$\nu(\varphi)$ is well-defined and there exists $C>0$, such that $-C < \nu(\varphi) < C$ along (\ref{krf1}).
\end{proposition}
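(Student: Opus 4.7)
The plan is to use the flow equation (\ref{maf1}) to rewrite the energy in a transparent form. Since $\dot\varphi = \log\bigl((\omega_t+\ddbar\varphi)^n/\Omega\bigr) = \log(\widetilde\omega_t^{\,n}/\Omega)$, we have
\[
\nu(\varphi) \;=\; \int_X \dot\varphi \,\widetilde\omega_t^{\,n}.
\]
In this form, the result should reduce to the pointwise bounds on $\dot\varphi$ that are already available.

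\emph{Well-definedness.} For each finite $t$, the long-time existence result of Zhang recalled in the introduction ensures that $\varphi(t,\cdot)$ is smooth on $X$ and that $\widetilde\omega_t$ is a smooth K\"ahler form on $X$. Hence $\dot\varphi \cdot \widetilde\omega_t^{\,n}$ is a smooth top-degree form on the compact manifold $X$ and its integral is a finite real number.

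\emph{Uniform boundedness.} I would combine the two pointwise estimates on $\dot\varphi$ that are already in hand. The upper bound $\dot\varphi \leq C$ comes directly from the maximum principle applied to $(\partial_t - \widetilde\Delta)\dot\varphi$, as recalled at the start of this section. The uniform lower bound $\dot\varphi \geq -C$ is precisely the content of Theorem \ref{lower bound}. Together with the uniform bound on $V_t = \int_X \widetilde\omega_t^{\,n}$, these yield
\[
|\nu(\varphi)| \;\leq\; \|\dot\varphi\|_{L^\infty(X)} \cdot V_t \;\leq\; C'',
\]
which is the required conclusion.

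A variant avoids even invoking Theorem \ref{lower bound} here: set $F = \widetilde\omega_t^{\,n}/\Omega = e^{\dot\varphi}$ and write $\nu(\varphi) = \int_X F\log F\,\Omega$. The elementary bound $x\log x \geq -1/e$ on $[0,\infty)$ furnishes the lower bound of $\nu$ for free, while $\dot\varphi \leq C$ yields $F\log F \leq C e^{C}$ pointwise and hence the upper bound. I do not expect any real obstacle in this proposition; its content is a near-immediate consequence of the $C^0$ bound on $\dot\varphi$. The interest of $\nu$, and the place where genuine work is required, lies in the following step, where one differentiates $\nu$ in $t$ and aims to express $\dot\nu$ in terms of $\int_X |\nabla \dot\varphi|^2\,\widetilde\omega_t^{\,n}$, the quantity that will ultimately drive the convergence of the Ricci potential via the calculus Lemma \ref{lemma}.
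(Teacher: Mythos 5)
Your proof is correct and follows essentially the same route as the paper: both rewrite $\nu(\varphi)=\int_X\dot\varphi\,\widetilde\omega_t^{\,n}$ and bound it using the uniform upper and lower bounds on $\dot\varphi$ together with the uniform bound on $V_t$. Your variant for the lower bound (the pointwise inequality $x\log x\ge -1/e$ applied to $F=\widetilde\omega_t^{\,n}/\Omega$, using $\int_X\Omega=1$) plays exactly the role of the Jensen-inequality argument the paper gives for the same purpose of avoiding Theorem \ref{lower bound}; both are valid.
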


\begin{proof}It is easy to see that $\nu(\varphi)$ is well-defined. If we rewrite
$$\nu(\varphi)=\int_X \dot\varphi \tilde\omega_t^n,$$
then $\nu(\varphi)$ is uniformly bounded from above and below by the upper and lower bound of $\dot\varphi$. Here, we derive the uniform lower bound by Jensen's inequality without using Theorem \ref{lower bound}:

$$\nu(\varphi)=- V_t \int_X \log \frac{\Omega}{\tilde\omega_t ^n} \frac{\tilde\omega_t ^n}{V_t} \geq - V_t \log \int_X \frac{\Omega}{V_t} \geq -C,$$
as $V_t$ is uniformly bounded.
\qed\end{proof}

\begin{proposition}
There exists constant $C>0$ such that for all $t>0$ along the flow (\ref{krf1}):
\begin{equation}\label{energy2}
\ddt{} \nu(\varphi) \leq -\int_X \|\nabla
\dot\varphi\|^2_{\widetilde\omega_t} \widetilde\omega_t ^n + C
e^{-t}.
\end{equation}
\end{proposition}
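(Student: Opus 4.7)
The plan is to differentiate the alternative expression $\nu(\varphi) = \int_X \dot\varphi\,\widetilde\omega_t^n$ obtained in the previous proposition and sort the resulting terms into a main negative piece and an $O(e^{-t})$ error. From $\widetilde\omega_t = \omega_\infty + e^{-t}\chi + \ddbar\varphi$ one gets $\ddt{}\widetilde\omega_t = -e^{-t}\chi + \ddbar\dot\varphi$, and differentiating the flow equation $\dot\varphi = \log(\widetilde\omega_t^n/\Omega)$ gives $\ddot\varphi = \widetilde\Delta\dot\varphi - e^{-t}\tr_{\widetilde\omega_t}\chi$. Hence the product rule produces
\begin{equation*}
\ddt{}\nu(\varphi) = \int_X \bigl(\widetilde\Delta\dot\varphi - e^{-t}\tr_{\widetilde\omega_t}\chi\bigr)\widetilde\omega_t^n + n\int_X \dot\varphi\,\widetilde\omega_t^{n-1}\wedge\bigl(-e^{-t}\chi+\ddbar\dot\varphi\bigr).
\end{equation*}

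First I would dispose of the $\widetilde\Delta\dot\varphi$ term: since $\widetilde\omega_t$ is $d$-closed, $\widetilde\Delta\dot\varphi\cdot\widetilde\omega_t^n = n\ddbar\dot\varphi\wedge\widetilde\omega_t^{n-1}$ is exact, so it integrates to zero. Next I would integrate the $\dot\varphi\,\ddbar\dot\varphi\wedge\widetilde\omega_t^{n-1}$ piece by parts (again using $d\widetilde\omega_t=0$) to convert it into $-\int_X \|\nabla\dot\varphi\|_{\widetilde\omega_t}^2\,\widetilde\omega_t^n$; this is the main negative term appearing in (\ref{energy2}). What is left are the two cross terms
\begin{equation*}
-ne^{-t}\int_X \chi\wedge\widetilde\omega_t^{n-1} \quad\text{and}\quad -ne^{-t}\int_X \dot\varphi\,\chi\wedge\widetilde\omega_t^{n-1},
\end{equation*}
both carrying the prefactor $e^{-t}$.

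To finish I would show each of the remaining integrals is uniformly bounded in $t$. The first is cohomological, depending only on the classes of $\chi$ and $\widetilde\omega_t$, which are bounded since $V_t=[\widetilde\omega_t]^n$ stays uniformly bounded along the flow. For the second, I would use $\|\dot\varphi\|_{L^\infty(X)} \leq C$, coming from the standard upper bound recalled at the start of the section combined with Theorem \ref{lower bound}, together with the pointwise inequality $-C'\omega_0 \leq \chi \leq C'\omega_0$ (valid since $\chi = \omega_0 - \omega_\infty$ is a fixed smooth $(1,1)$-form on compact $X$) to dominate the integrand by a constant multiple of $\omega_0\wedge\widetilde\omega_t^{n-1}$, whose integral is again cohomological and hence bounded. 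Combining everything gives (\ref{energy2}).

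The argument is essentially mechanical given the inputs prepared earlier in the section. The only delicate point is the uniform $L^\infty$ bound on $\dot\varphi$ used to control the error term, which is precisely why the lower bound of Theorem \ref{lower bound} was established as a prerequisite; without it, the cross term $-ne^{-t}\int_X \dot\varphi\,\chi\wedge\widetilde\omega_t^{n-1}$ would not obviously be $O(e^{-t})$.
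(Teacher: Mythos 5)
Your proposal is correct and follows essentially the same route as the paper: differentiate $\nu(\varphi)=\int_X \dot\varphi\,\widetilde\omega_t^n$ by the product rule, kill the $\widetilde\Delta\dot\varphi$ term by exactness, integrate the $\dot\varphi\,\ddbar\dot\varphi$ term by parts to produce the gradient term, and bound the two $e^{-t}$ cross terms using the cohomological invariance of $\int_X\chi\wedge\widetilde\omega_t^{n-1}$ together with the uniform two-sided bound on $\dot\varphi$ from Theorem \ref{lower bound}. The only cosmetic difference is that you sandwich $\chi$ between multiples of $\omega_0$ where the paper writes $\chi=\omega_0-\omega_\infty$ and majorizes by $\omega_0+\omega_\infty$.
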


\begin{proof} Along the flow (\ref{krf1}), we have:
\begin{eqnarray*}\label{energy1}
\ddt{} \nu(\varphi) &=& \int_X \tilde\Delta\dot\varphi \widetilde\omega_t ^n -
e^{-t}\int_X tr_{\widetilde\omega_t}\chi \widetilde\omega_t ^n - n e^{-t}\int_X \dot\varphi \chi \wedge \widetilde\omega_t ^{n-1} + n \int_X \dot\varphi \ddbar\dot\varphi \wedge \widetilde\omega_t ^n\\
&=& -\int_X \|\nabla
\dot\varphi\|^2_{\widetilde\omega_t} \widetilde\omega_t ^n -
ne^{-t}\int_X \chi \wedge \widetilde\omega_t ^{n-1} - ne^{-t}
\int_X \dot\varphi \chi \wedge \widetilde\omega_t ^{n-1}\\
&=& -\int_X \|\nabla
\dot\varphi\|^2_{\widetilde\omega_t} \widetilde\omega_t ^n -
n [\chi][\omega_t] ^{n-1} e^{-t} - ne^{-t}
\int_X \dot\varphi (\omega_0-\omega_\infty) \wedge
\widetilde\omega_t
^{n-1}\\
&\leq& -\int_X \|\nabla \dot\varphi\|^2_{\widetilde\omega_t}
\widetilde\omega_t ^n + n [\chi][\omega_t] ^{n-1} e^{-t} + Ce^{-t}\int_X (\omega_0 +
\omega_\infty) \wedge \omega_t
^{n-1}\\
&=& -\int_X \|\nabla \dot\varphi\|^2_{\widetilde\omega_t}
\widetilde\omega_t ^n + [n \chi + C \omega_0+ C \omega_{\infty}][\omega_t]^{n-1} e^{-t}\\
&\leq& -\int_X \|\nabla \dot\varphi\|^2_{\widetilde\omega_t}
\widetilde\omega_t ^n + C'e^{-t}.
\end{eqnarray*}

Notice that we used the evolution of $\dot\varphi$ and integration by parts in the first two equalities and the uniform bound of $\dot\varphi$ in the first inequality and the last inequality holds since $\omega_t$ is uniformly bounded.
\qed\end{proof}

\begin{lemma}\label{gradient}
On each $K\subset\subset X\setminus E$, $\|\nabla \dot{\varphi}(t)\|^2_{\widetilde\omega_t} \rightarrow 0$ as $t\rightarrow +\infty$ uniformly.

\end{lemma}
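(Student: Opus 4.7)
The plan is to extract a spacetime $L^2$-bound for $\nabla \dot\varphi$ from the energy inequality (\ref{energy2}), then combine it with the local uniform estimate (\ref{uniform estimate}) and Lemma \ref{lemma} to obtain uniform pointwise decay on $K$. Integrating (\ref{energy2}) from $0$ to $T$, using that $\nu(\varphi)$ is uniformly bounded along the flow by the preceding Proposition and that $\int_0^{+\infty} e^{-t}\,dt<+\infty$, and letting $T\to+\infty$, one obtains the global spacetime bound
\begin{equation*}
\int_0^{+\infty}\!\int_X \|\nabla\dot\varphi\|^2_{\widetilde\omega_t}\,\widetilde\omega_t^n\,dt<+\infty.
\end{equation*}

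Given $K\subset\subset X\setminus E$, I would choose a compact $K'$ with $K\subset\subset K'\subset\subset X\setminus E$ whose distance from $\partial K$ is bounded below by some $r_0>0$ in a fixed background metric. The estimate (\ref{uniform estimate}) applied on $K'$ yields uniform $C^\infty$-bounds on $u$ on $[0,+\infty)\times K'$; combined with $\dot\varphi$ being locally bounded, this forces $\widetilde\omega_t$ to be uniformly equivalent on $K'$ to a fixed smooth K\"ahler form, and $\|\nabla\dot\varphi\|^2_{\widetilde\omega_t}$ together with its spatial gradient and time derivative to be uniformly bounded on $[0,+\infty)\times K'$. Setting
\begin{equation*}
f(t) := \int_{K'}\|\nabla\dot\varphi\|^2_{\widetilde\omega_t}\,\widetilde\omega_t^n,
\end{equation*}
we have $f\geq 0$ and $\int_0^{+\infty}f(t)\,dt<+\infty$ by the previous display, while $\bigl|\ddt f\bigr|$ is uniformly bounded by differentiating under the integral sign and invoking the uniform estimates on $K'$. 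Lemma \ref{lemma} then yields $f(t)\to 0$ as $t\to +\infty$.

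To upgrade this $L^1$-on-$K'$ statement to uniform pointwise decay on $K$, I would argue by contradiction. Suppose there exist $\delta>0$ and a sequence $(t_i,x_i)\in [0,+\infty)\times K$ with $t_i\to+\infty$ and $\|\nabla\dot\varphi(t_i,x_i)\|^2_{\widetilde\omega_{t_i}}\geq\delta$. The uniform spatial $C^1$-bound on $K'$ provides a radius $r\in(0,r_0]$, independent of $i$, with $\|\nabla\dot\varphi(t_i,y)\|^2\geq\delta/2$ for every $y\in B(x_i,r)\subset K'$; together with the uniform lower bound for $\widetilde\omega_t^n$ on $K'$, this forces $f(t_i)\geq c\delta>0$, contradicting $f(t)\to 0$. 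The main obstacle is really the packaging rather than any single hard estimate: the energy inequality only delivers \emph{global} integrability (the degeneration of $\widetilde\omega_t$ near $E$ is not controlled), while the desired conclusion is \emph{local and uniform}; Lemma \ref{lemma} coupled with the local $C^\infty$ regularity on a slightly enlarged compact $K'$ is precisely what bridges the two.
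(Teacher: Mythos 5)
Your proposal is correct and follows essentially the same route as the paper: integrate the energy inequality to get the global spacetime $L^2$-bound, apply Lemma \ref{lemma} together with the local uniform estimates (\ref{uniform estimate}) to deduce that $\int_{K'}\|\nabla\dot\varphi\|^2_{\widetilde\omega_t}\widetilde\omega_t^n\to 0$, and then upgrade to uniform pointwise decay on $K$ by the same contradiction argument using the uniform spatial $C^1$-control on a slightly larger compact set $K'$. Your write-up merely makes explicit the verification that the function $f(t)$ has bounded derivative, which the paper leaves implicit.
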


\begin{proof} Integrating (\ref{energy2}) from $0$ to $T$, we have:

$$-C \leq \nu(\varphi)(T)-\nu(\varphi)(0) \leq -\int_0^T \int_X
\|\nabla \dot\varphi\|^2_{\widetilde\omega_t} \widetilde\omega_t
^n dt + C$$ for some constant $C>0$. It follows that

$$\int_0^{+\infty} \int_X \|\nabla
\dot\varphi\|^2_{\widetilde\omega_t} \widetilde\omega_t ^n dt \leq
2C,$$ by letting $T \rightarrow +\infty$. Hence, (\ref{uniform
estimate}) and Lemma \ref{lemma} imply that for any compact set
$K' \subset X \setminus E$,

\begin{equation}\label{limit}
\int_{K'} \| \nabla \dot{\varphi}(t, \cdot)
\|_{\widetilde\omega_{t}} ^2 \tilde{\omega}_{t}^n \rightarrow 0.
\end{equation}

Now, assume that there exists $\delta>0$, $z_j \in K $ and $t_j
\rightarrow \infty$ such that $\| \nabla \dot{\varphi}(t_j, z_j)
\|_{\widetilde\omega_{t_j}} ^2 > \delta.$ It follows from
(\ref{uniform estimate}) that $\| \nabla \dot{\varphi}(t_j, z)
\|_{\widetilde\omega_{t_j}} ^2 > \frac{\delta}{2}$ for  $z\in
B(z_j, r) \subset K'$ for $K \subset K' \subset \subset X\setminus
E$ and some $r>0$. This  contradicts with (\ref{limit}).

\qed\end{proof}

{\bf{Proof of Theorem \ref{Main Theorem}:}}

First of all, we want to show that for any $k \in \mathbb{Z}, K
\subset\subset X \setminus E$, $u(t) \rightarrow \psi$ in
$C^{\infty}(K)$.

Exhaust $X \setminus E$ by compact sets $K_i$ with $K_i \subset
K_{i+1}$ and $\bigcup_i K_i = X\setminus E$. As $\| u
\|_{C^k(K_i)} \leq C_{k, i}$, after passing to a subsequence $t_{ij}$, we
know $u(t_{ij}) \rightarrow \psi$ in $C^{\infty}(K_i)$ topology.
By picking up the diagonal subsequence of $u(t_{ij})$, we know
that $\psi \in L^{\infty}(X) \bigcap C^{\infty}(X \setminus E)
\bigcap PSH(X \setminus E, \omega_\infty)$. Furthermore, $\psi$
can be extended over $E$ as a function in $PSH(X, \omega_\infty)$.
Taking gradient of (\ref{maf1}), by Lemma \ref{gradient}, we have
on $K_i$ as $t_{ij} \rightarrow +\infty$,

$$\nabla \dot\varphi = \nabla \log \frac{(\omega_t +
\ddbar \varphi)^n }{\Omega} \rightarrow 0 = \nabla \log
\frac{(\omega_\infty + \ddbar \psi)^n }{\Omega}.$$ Hence, we know
that $\log \frac{(\omega_\infty + \ddbar \psi)^n }{\Omega} =
constant$ on $X \setminus E$ . Then the constant can only be $0$
as $\psi$ is a bounded pluri-subharmonic function and $\int_X
\omega_{\infty}^n=\int_X \Omega$, which means that $\psi$ solves
the degenerate Monge-Amper\`{e} equation (\ref{MA}) globally in the sense of current and strongly on $X \setminus E$. Furthermore, we notice that $\int_X \psi \Omega=0$ as $\psi$ is bounded.

Suppose $u(t) \nrightarrow \psi$ in $C^{\infty}(K)$ for some
compact set $K \subset X \setminus E$, which means that there
exist $\delta>0$, $l \geq 0$, $K' \subset\subset X\setminus E$,
and a subsequence $u(s_j)$ such that $\|u(s_j) -
\psi\|_{C^{l}(K')}> \delta$. While $u(s_j)$ are bounded in
$C^{k}(K)$ for any compact set $K$ and $k \geq 0$,  from the above
argument, we know that by passing to a subsequence, $u(s_j)$
converges to $\psi'$ in $C^{\infty}(K)$ for any $K \subset \subset
X\setminus E$, where $\psi'$ is also a solution to equation
(\ref{MA}) under the normalization $\int_X \psi' \Omega=0$, which
has to be $\psi$ by the uniqueness of the solution to (\ref{MA}).
This is a contradiction. It thus follows that $u(t) \rightarrow
\psi$ in $L^p(X)$ for any $p>0$.

Notice that  $u(t), \psi$ are uniformly bounded.  Integrating by
part, we easily  deduce that $\widetilde\omega_t=\omega_t+\ddbar u
\rightarrow \omega_\infty+\ddbar \psi$ weakly in the sense of
current. The proof of the theorem is complete.

\qed





\section{Remarks on non-degenerate case}
In the case when $[\omega_\infty]$ is K\"{a}hler, the convergence of
(\ref{krf1}) has already been proven by Zhang in \cite{Z3} by
modifying Cao's argument in \cite{C}. However, by using the
functional $\nu(\varphi)$ defined in the previous section, we will
have an alternating proof to the convergence, without using
Li-Yau's Harnack inequality. We will sketch the proof in this section. We believe that this point
of view is well-known to the experts.

Firstly, under the same normalization $u= \varphi-\int_X \varphi
\Omega$, we will have the following uniform
estimates (\cite{Z3}): for any integer $k \geq 0$, there exists $C_k >0$, such
that

$$\|u\|_{C^k([0, +\infty)\times X)} \leq C_k .$$

Secondly, following the convergence argument as in the previous
section, we will obtain the $C^\infty$ convergence of $\tilde\omega_t$ along
(\ref{maf1}). More precisely, $u \rightarrow \psi$ in
$C^\infty$-norm with $\psi$ solving (\ref{MA}) as a strong
solution. In particular, $\dot\varphi \rightarrow 0$ in $C^\infty$-norm as $t \rightarrow +\infty$. Let $\tilde\omega_\infty=\omega_\infty+\ddbar \psi$ be
the limit metric. Furthermore, we have the bounded geometry along
(\ref{krf1}) for $0\leq t \leq +\infty$:
\begin{equation}\label{bounded geometry}
\frac{1}{C} \tilde\omega_{\infty} \leq \tilde\omega_t \leq C \tilde\omega_{\infty}.
\end{equation}

Finally, we need to prove the exponential convergence: $\|\tilde\omega_t - \tilde\omega_{\infty}\|_{C^k(X)} \leq C_k e^{-\alpha t}$ and $\|u(t)-\psi\|_{C^k(X)} \leq C_k e^{-\alpha t}$, for some $C_k, \alpha>0$. Then it is sufficient to prove: for any integer $k \geq 0$,
there exists $c_k>0$, such that

$$\| D^k \dot\varphi \|^2_{\omega_0} \leq c_k e^{-\alpha t}.$$

Essentially, by following the proof of Proposition 10.2 in the case of holomorphic vector fields $\eta(X)=0$ in \cite{CT}, we can also prove the following proposition.

\begin{proposition}\label{exponential estimate}
Let $c(t)=\int_X \ddt\varphi \tilde{\omega}_{t}^n$. There exists $\alpha>0$ and $c'_k>0$ for any integer $k \geq 0$,
such that
$$\int_X \| D^k (\ddt \varphi-c(t)) \|^2_{\widetilde\omega_t} \tilde{\omega}_{t}^n \leq c'_k e^{-\alpha t}.$$
\end{proposition}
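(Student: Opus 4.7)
The plan is to first establish the $k=0$ case via an energy method using $\nu(\varphi)$ together with a uniform Poincar\'e inequality, and then to bootstrap to all $k \geq 1$ using parabolic regularity available in the K\"ahler (non-degenerate) setting.

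For $k=0$, set $E(t) := \int_X (\dot\varphi - c(t))^2 \widetilde\omega_t^n$. Because $V_t = [\omega_t]^n = 1 + O(e^{-t})$, the normalization $c(t)$ differs from the true $\widetilde\omega_t^n$-mean of $\dot\varphi$ only by $O(e^{-t})$, so it suffices to control the mean-zero version modulo an $O(e^{-2t})$ term. Differentiating (\ref{maf1}) gives $\ddt{}\dot\varphi = \widetilde\Delta\dot\varphi - e^{-t}\tr_{\widetilde\omega_t}\chi$; substituting into $\ddt{} E(t)$ and integrating by parts (using that gradients annihilate constants) yields
$$\ddt{} E(t) = -2\int_X \|\nabla\dot\varphi\|^2_{\widetilde\omega_t} \widetilde\omega_t^n + O(e^{-t}),$$
where the error absorbs contributions from $e^{-t}\tr_{\widetilde\omega_t}\chi$, from $\dot c(t)$, and from the variation of $\widetilde\omega_t^n$, all uniformly bounded in view of the $C^0$-bound on $\dot\varphi$ and the bounded geometry (\ref{bounded geometry}). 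That same equivalence (\ref{bounded geometry}) makes the Poincar\'e constant on $(X, \widetilde\omega_t)$ uniform in $t$, giving $E(t) \leq C_P \int_X \|\nabla\dot\varphi\|^2_{\widetilde\omega_t} \widetilde\omega_t^n + O(e^{-2t})$. Combining, $E'(t) \leq -(2/C_P) E(t) + C e^{-t}$, and Gronwall's inequality delivers $E(t) \leq c'_0 e^{-\alpha t}$ for some $\alpha > 0$.

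For $k \geq 1$ the constant $c(t)$ vanishes under differentiation, so the target reduces to $\int_X \|D^k \dot\varphi\|^2_{\widetilde\omega_t} \widetilde\omega_t^n \leq c'_k e^{-\alpha t}$. I would apply parabolic Schauder estimates on unit-length time intervals $[t-1, t]$ to $f := \dot\varphi - c(t)$, which solves a linear parabolic equation $\partial_t f = \widetilde\Delta f + g$ with $g$ of $O(e^{-t})$ in every $C^k$-norm (invoking the K\"ahler version of (\ref{uniform estimate}) giving global $C^\infty$-bounds on $u$). Since (\ref{bounded geometry}) makes the Schauder constants uniform in $t$, the exponential $L^2$ decay of $f$ upgrades to exponential $C^k$ decay at a rate that is perhaps smaller but still positive, yielding the desired $L^2$ bound on $D^k f$. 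A valid alternative is Gagliardo--Nirenberg interpolation between the $L^2$ decay and the uniform $H^m$-bounds.

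The main obstacle I anticipate is the uniformity in $t$ of the Poincar\'e and Schauder constants; this depends critically on the two-sided equivalence (\ref{bounded geometry}) and on the smooth convergence $\widetilde\omega_t \to \widetilde\omega_\infty$ already established in the K\"ahler case. Granted that, the entire argument runs parallel to the $\eta(X) = 0$ portion of the proof of Proposition 10.2 in \cite{CT}.
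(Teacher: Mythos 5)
The paper does not actually prove this proposition: it simply asserts that the argument of Proposition 10.2 of \cite{CT} (in the case $\eta(X)=0$) carries over, so there is no in-paper proof to compare against line by line. Your reconstruction follows the same basic strategy as Chen--Tian's: an energy/Poincar\'e/Gronwall argument for the $L^2$ decay, upgraded to higher derivatives. The one place where you substitute something different is at $k\geq 1$: Chen--Tian run an induction on the derivative energies $\int \|D^k\dot\varphi\|^2$ themselves, whereas you invoke parabolic Schauder estimates on unit time slabs, or alternatively Gagliardo--Nirenberg interpolation between the $L^2$ decay and the uniform global $C^m$ bounds on $u$ available in the K\"ahler case. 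The interpolation route is the cleanest and is certainly valid here; it buys you a shorter argument at the cost of a (harmless) degradation of the decay rate $\alpha$.

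There is, however, one genuine imprecision in your $k=0$ computation that you should repair. When you differentiate $E(t)=\int_X(\dot\varphi-c(t))^2\widetilde\omega_t^n$, the contribution of the variation of the volume form is
$$\int_X(\dot\varphi-c)^2\,\bigl(\widetilde\Delta\dot\varphi-e^{-t}\tr_{\widetilde\omega_t}\chi\bigr)\,\widetilde\omega_t^n
=-2\int_X(\dot\varphi-c)\,\|\nabla\dot\varphi\|^2_{\widetilde\omega_t}\,\widetilde\omega_t^n+O(e^{-t}),$$
and the first term on the right is \emph{not} $O(e^{-t})$: it is of the same order as the good term $-2\int\|\nabla\dot\varphi\|^2$, with coefficient $\sup_X|\dot\varphi-c|$. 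It cannot be absorbed into the exponential error as you claim; instead it must be absorbed into the gradient term using the fact (established earlier in Section 3) that $\dot\varphi\to 0$ uniformly, so that $\sup_X|\dot\varphi-c(t)|\leq 1/2$ for $t\geq T_0$, leaving $\ddt{}E\leq -\int\|\nabla\dot\varphi\|^2\widetilde\omega_t^n+Ce^{-t}$ on $[T_0,\infty)$. With that correction the Poincar\'e--Gronwall step goes through as you describe (the uniformity of the Poincar\'e constant from (\ref{bounded geometry}) is fine). One further small point: when you feed $g=-e^{-t}\tr_{\widetilde\omega_t}\chi-\dot c(t)$ into the parabolic estimate, note that $\dot c(t)$ contains the term $-\int_X\|\nabla\dot\varphi\|^2\widetilde\omega_t^n$, which is only known to be $O(e^{-\alpha t})$ \emph{after} the $k=0$ step, not $O(e^{-t})$ a priori; the bootstrap order is consistent but should be stated.
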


By following the argument of Corollary 10.3 in \cite{CT}, we can prove that
$|c(t)| \leq C e^{-\alpha t}$. On the other hand, since
the geometry is bounded along the flow (\ref{bounded geometry}), the Sobolev
constants are uniformly bounded. By using the Sobolev inequality,
we have:

$$\|D^k (\dot\varphi-c(t))\|^2_{\omega_0} \leq c_k e^{-\alpha t}.$$

The exponential convergence is thus obtained combining the estimate
of $c(t)$.

\bigskip

\small

Yuan Yuan (yuanyuan$@$math.rutgers.edu), Department of
Mathematics, Rutgers University - Hill Center for the Mathematical
Sciences, 110 Frelinghuysen Road, Piscataway, NJ 08854-8019.
\end{document}